\newtheorem{theorem}{Theorem}[section]
\newtheorem{corollary}[theorem]{Corollary}
\newtheorem{lemma}[theorem]{Lemma}
\newtheorem{proposition}[theorem]{Proposition}
\theoremstyle{definition}
\theoremstyle{remark}
\theoremstyle{remark}
\newcommand{\beql}[1]{\begin{equation}\label{#1}}
\newcommand{\eeq}{\end{equation}}
\begin{document}

\title[Complex Hadamard matrices and equiangular tight frames]{Complex Hadamard matrices and equiangular tight frames}

\author{Ferenc Sz\"oll\H{o}si}

\date{April, 2011.}

\address{Ferenc Sz\"oll\H{o}si: Department of Mathematics and its Applications, Central European University, H-1051, N\'ador u. 9, Budapest, Hungary}\email{szoferi@gmail.com}

\thanks{This work was supported by the Hungarian National Research fund OTKA K-77748}

\begin{abstract}
In this paper we give a new construction of parametric families of complex Hadamard matrices of square orders, and connect them to equiangular tight frames. The results presented here generalize some of the recent ideas of Bodmann \emph{et al}.\ \cite{Bodmann} and extend the list of known equiangular tight frames. In particular, a $(36,21)$-frame coming from a nontrivial cube root signature matrix is obtained for the first time.
\end{abstract}

\maketitle

{\bf 2000 Mathematics Subject Classification.} Primary 05B20, secondary 46L10.
	
{\bf Keywords and phrases.} {\it Complex Hadamard matrices, Equiangular tight frames}

\section{Introduction}\label{sec:intro}
Let $\mathcal{H}$ be a complex Hilbert space, and let $F=\{f_i\}_{i\in I}\subset \mathcal{H}$ be a subset. We call $F$ a frame for $\mathcal{H}$ provided that there are two constants $C,D>0$ such that the inequality
\[C\left\|x\right\|^2\leq \sum_{i\in I}\left|\left\langle x,f_i\right\rangle\right|^2\leq D\left\|x\right\|^2\]
holds for every $x\in\mathcal{H}$. When $C=D=1$ then the frame is called normalized, tight (also called a Parseval frame). Real and complex equiangular tight frames have been heavily investigated recently \cite{BodmannPAMS,Bodmann,four,F,God2,R} as they have applications in coding \cite{HP} and quantum information theory \cite{grassl}. Throughout this paper we shall be concerned only with Parseval frames for the $k$ dimensional complex Hilbert space, equipped with the usual inner product. We use the term $(n,k)$ frame to refer to a Parseval frame consisting of $n$ vectors in $\mathbb{C}^k$. Every such Parseval frame induces an isometric embedding of $\mathbb{C}^k$ into $\mathbb{C}^n$ with the map
\[ V: \mathbb{C}^k\mapsto \mathbb{C}^n, (Vx)_j=\left\langle x,f_j\right\rangle, \text{for all } 1\leq j\leq n\]
which is called the analysis operator of the frame. As $V$ is linear, we can identify it with an $n\times k$ matrix and the frame vectors $\{f_1,\hdots, f_n\}$ are the respective columns of $V^\ast$. A standard argument shows \cite{Bodmann, HP} that an $(n,k)$ frame is determined up to a natural unitary equivalence by its Gram matrix $VV^\ast$, which is a self-adjoint projection of rank $k$, moreover, if the frame is uniform and equiangular (i.\ e.\ $\|f_i\|^2$ and $\left|\left\langle f_i,f_j\right\rangle\right|$ are constants for all $1\leq i\leq n$ and for all $i\neq j$, $1\leq i,j\leq n$, respectively), it follows that
\[VV^\ast=\frac{k}{n}I_n+\sqrt{\frac{k(n-k)}{n^2(n-1)}}Q\]
where $Q$ is a self-adjoint matrix with vanishing diagonal and unimodular off-diagonal entries, and $I_n$ is the identity matrix of order $n$. We refer to such frames as ETFs. The matrix $Q$ is called the Seidel matrix \cite{S1} or signature matrix associated with the $(n,k)$ frame. Note that $n\leq k^2$ and it is an outstanding open problem to decide if the upper bound is reachable for every $k$ \cite{grassl}.

The other mathematical objects concerned in this paper are complex Hadamard matrices (CHMs). A matrix $H$ with unimodular entries satisfying $HH^\ast=nI_n$ called complex Hadamard. Complex Hadamard matrices have important applications in many areas of mathematics, including algebraic combinatorics \cite{GC}, operator theory \cite{popa} and harmonic analysis \cite{KM, tao}.

The purpose of this paper is to directly relate the two objects introduced to each other and establish a connection between complex Hadamard matrices and a class of equiangular tight frames. The correspondence we developed here turned out to be quite fruitful: on the one hand we construct new, previously unknown parametric families of self-adjoint complex Hadamard matrices of square orders, on the other hand we exhibit (amongst others) a $36\times 36$ nontrivial cube root signature matrix leading to an equiangular $(36,21)$ frame which settles a recent question of Bodmann \emph{et al.}\ \cite{Bodmann}.
\section{CHMs and ETFs}
We recall the following elegant characterization of complex equiangular frames \cite{HP, S1, S2}.
\begin{theorem}[Holmes--Paulsen, \cite{HP}]
Let $Q$ be a self-adjoint $n\times n$ matrix with $Q_{ii}=0$ and $\left|Q_{ij}\right|=1$ for all $i\neq j$. Then the following are equivalent:
\begin{enumerate}
\item[(a)] $Q$ is the signature matrix of an equiangular $(n,k)$ frame for some $k$;
\item[(b)] $Q^2=(n-1)I+\mu Q$ for some necessarily real $\mu$; and
\item[(c)] $Q$ has exactly two eigenvalues.
\end{enumerate}
\end{theorem}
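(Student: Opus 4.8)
The plan is to prove the cycle of implications by first establishing the purely linear-algebraic equivalence (b)$\Leftrightarrow$(c), then closing the loop through (a)$\Rightarrow$(b) and (c)$\Rightarrow$(a). Throughout I will use the two trace identities that follow at once from the hypotheses on $Q$: since $Q$ is self-adjoint with zero diagonal, $\mathrm{tr}\,Q=0$; and since moreover every off-diagonal entry is unimodular, $\mathrm{tr}\,Q^2=\mathrm{tr}\,QQ^\ast=\sum_{i,j}|Q_{ij}|^2=n(n-1)$. I also note that the statement tacitly assumes $n\geq 2$, which I will need to guarantee two distinct eigenvalues and $Q\neq 0$.

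For (c)$\Rightarrow$(b): if $Q$ has exactly two (necessarily real) eigenvalues $\lambda_1\neq\lambda_2$, then, $Q$ being self-adjoint hence diagonalizable, $(Q-\lambda_1 I)(Q-\lambda_2 I)=0$, i.e.\ $Q^2=(\lambda_1+\lambda_2)Q-\lambda_1\lambda_2 I$. Taking traces and using $\mathrm{tr}\,Q=0$ and $\mathrm{tr}\,Q^2=n(n-1)$ forces $-\lambda_1\lambda_2 n=n(n-1)$, so $-\lambda_1\lambda_2=n-1$ and (b) holds with $\mu=\lambda_1+\lambda_2\in\mathbb{R}$. For (b)$\Rightarrow$(c): the polynomial $x^2-\mu x-(n-1)$ annihilates $Q$ and, for $n\geq 2$, has discriminant $\mu^2+4(n-1)>0$, hence two distinct real roots; so the minimal polynomial of $Q$ has degree at most two and $Q$ has at most two eigenvalues. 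It cannot have just one, for then $Q$ would be a scalar multiple of $I$ with zero diagonal, hence $Q=0$, contradicting $|Q_{ij}|=1$.

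For (a)$\Rightarrow$(b): start from the displayed formula $VV^\ast=\frac{k}{n}I+cQ$ with $c=\sqrt{k(n-k)/(n^2(n-1))}$ and expand the projection identity $(VV^\ast)^2=VV^\ast$. Comparing the coefficients of $I$ and of $Q$ and dividing through by $c^2$ yields $Q^2=(n-1)I+\mu Q$ with $\mu=(n-2k)/(nc)$, which is real. For (c)$\Rightarrow$(a): order the two eigenvalues as $\lambda_1>\lambda_2$ and set $P=(Q-\lambda_2 I)/(\lambda_1-\lambda_2)$, the spectral projection of $Q$ onto the $\lambda_1$-eigenspace, a self-adjoint projection. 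Writing $k$ for its rank and using $\mathrm{tr}\,P=k$ gives $k/n=-\lambda_2/(\lambda_1-\lambda_2)$, hence also $(n-k)/n=\lambda_1/(\lambda_1-\lambda_2)$, and therefore $k(n-k)/n^2=-\lambda_1\lambda_2/(\lambda_1-\lambda_2)^2=(n-1)/(\lambda_1-\lambda_2)^2$ by (b); thus $1/(\lambda_1-\lambda_2)^2=k(n-k)/(n^2(n-1))$ and $P=\frac{k}{n}I+cQ$ with precisely the prescribed $c$. Factoring $P=VV^\ast$ with $V$ an $n\times k$ matrix satisfying $V^\ast V=I_k$, the operator $V$ is an isometry, so the columns of $V^\ast$ form a Parseval frame with Gram matrix $P$; its diagonal entries $\|f_i\|^2=k/n$ are constant (as $Q$ has zero diagonal) and the moduli $|\langle f_i,f_j\rangle|=c$ are constant, so this is an equiangular $(n,k)$ frame with signature matrix $Q$.

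The arithmetic is routine; the one step deserving care is matching the scalar $c$ in (c)$\Rightarrow$(a), namely the identity $k(n-k)/(n^2(n-1))=1/(\lambda_1-\lambda_2)^2$. This is exactly where the normalization $\mathrm{tr}\,Q^2=n(n-1)$ does its work, and it pleasingly shows that $k$ is forced to be the multiplicity of the larger eigenvalue of $Q$, so the frame dimension is not a free parameter but is read off from the spectrum.
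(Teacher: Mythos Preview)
The paper does not supply a proof of this theorem: it is stated with attribution to Holmes--Paulsen \cite{HP} (with reference also to \cite{S1,S2}) and used as a black box, so there is no in-paper argument to compare against.

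Your proof is correct and self-contained. The trace identities $\mathrm{tr}\,Q=0$ and $\mathrm{tr}\,Q^2=n(n-1)$ are exactly what is needed, and you deploy them at the right moments: in (c)$\Rightarrow$(b) to pin down the constant term $-\lambda_1\lambda_2=n-1$, and in (c)$\Rightarrow$(a) to match the coefficient $c$. The step (a)$\Rightarrow$(b) via squaring the projection $VV^\ast$ is clean, and your reconstruction in (c)$\Rightarrow$(a)---taking the spectral projection $P$ onto the top eigenspace, reading off $k=\mathrm{rank}\,P$ from $\mathrm{tr}\,P$, and factoring $P=VV^\ast$ with $V^\ast V=I_k$---is the standard route and is carried out accurately. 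Your choice $\lambda_1>\lambda_2$ ensures $1/(\lambda_1-\lambda_2)>0$, so the coefficient of $Q$ in $P$ agrees in sign with the positive square root in the paper's displayed formula for $VV^\ast$; this is the one orientation choice that matters, and you made it correctly. The closing remark that $k$ is forced to equal the multiplicity of $\lambda_1$ is a nice observation and is consistent with formula~\eqref{kval}.
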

Note that the value $k$ above depends on $n$ and $\mu$ only. In particular (see \cite{Bodmann}), we have
\beql{kval}
k=\frac{n}{2}-\frac{\mu n}{2\sqrt{4(n-1)+\mu^2}}.
\eeq
It follows that if $Q$ is a signature matrix of an $(n,k)$ frame, then $-Q$ is a signature matrix of an $(n,n-k)$ frame. Let us recall here a quick example from \cite{Bodmann}:
\beql{Q9}
Q_9=\left[
\begin{array}{ccccccccc}
 0 & 1 & 1 & 1 & 1 & 1 & 1 & 1 & 1 \\
 1 & 0 & 1 & \omega  & \omega  & \omega  & \omega ^2 & \omega ^2 & \omega ^2 \\
 1 & 1 & 0 & \omega ^2 & \omega ^2 & \omega ^2 & \omega  & \omega  & \omega  \\
 1 & \omega ^2 & \omega  & 0 & \omega  & \omega ^2 & 1 & \omega  & \omega ^2 \\
 1 & \omega ^2 & \omega  & \omega ^2 & 0 & \omega  & \omega  & \omega ^2 & 1 \\
 1 & \omega ^2 & \omega  & \omega  & \omega ^2 & 0 & \omega ^2 & 1 & \omega  \\
 1 & \omega  & \omega ^2 & 1 & \omega ^2 & \omega  & 0 & \omega ^2 & \omega  \\
 1 & \omega  & \omega ^2 & \omega ^2 & \omega  & 1 & \omega  & 0 & \omega ^2 \\
 1 & \omega  & \omega ^2 & \omega  & 1 & \omega ^2 & \omega ^2 & \omega  & 0
\end{array}
\right], \omega=\mathbf{e}^{2\pi\mathbf{i}/3}.
\eeq
The matrix $Q_9$ above is a $9\times 9$ nontrivial cube root signature matrix of an equiangular $(9, 6)$-frame. One can observe immediately, that $Q_9$ is ``almost'' a complex Hadamard matrix. Indeed, $H=Q_9+I$ is complex Hadamard. Now we exploit this connection in general, and give a characterization of some special signature matrices through complex Hadamard matrices. The main result of this paper is the following
\begin{theorem}\label{TH}
Let $Q$ be a self-adjoint $n\times n$ matrix with $Q_{ii}=0$ and $\left|Q_{ij}\right|=1$ for all $i\neq j$. Then the following are equivalent:
\begin{enumerate}
\item[(a)] $Q^2=(n-1)I+\mu Q$ for some necessarily real $-2\leq\mu\leq 2$; and
\item[(b)] $H:=Q+\lambda I$ is a complex Hadamard matrix for $\lambda=-\mu/2\pm\mathbf{i}\sqrt{1-\left|\mu\right|^2/4}$.
\end{enumerate}
\end{theorem}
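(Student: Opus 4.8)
The plan is to prove both implications simultaneously from one elementary identity. Since $Q$ is self-adjoint, $H^\ast=(Q+\lambda I)^\ast=Q+\bar\lambda I$, so
\[
HH^\ast=(Q+\lambda I)(Q+\bar\lambda I)=Q^2+(\lambda+\bar\lambda)Q+|\lambda|^2I .
\]
Everything then comes down to understanding the scalar $\lambda=-\mu/2\pm\mathbf{i}\sqrt{1-|\mu|^2/4}$, which is exactly a root of the quadratic $z^2+\mu z+1=0$, whose discriminant is $\mu^2-4$. I would first record the elementary facts: for real $\mu$ with $-2\le\mu\le2$ this root is a genuine complex number with $\lambda+\bar\lambda=-\mu$ and $|\lambda|^2=\mu^2/4+(1-\mu^2/4)=1$. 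Conversely, a short computation of $|\lambda|^2$ shows that if a $\lambda$ of the prescribed shape is to be unimodular, then $\mu$ must be real with $|\mu|\le2$; thus the constraint on $\mu$ in (a) is not an extra hypothesis but precisely the condition for such a unimodular $\lambda$ to exist, i.e.\ the discriminant condition $\mu^2-4\le0$.

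The reason unimodularity of $\lambda$ matters is that then $H=Q+\lambda I$ has all entries of modulus $1$ — its off-diagonal entries are those of $Q$ and its diagonal entries all equal $\lambda$ — so $H$ is a complex Hadamard matrix if and only if the single condition $HH^\ast=nI_n$ holds. Substituting $\lambda+\bar\lambda=-\mu$ and $|\lambda|^2=1$ into the display gives $HH^\ast=Q^2-\mu Q+I$, hence $HH^\ast=nI_n\iff Q^2=(n-1)I+\mu Q$. For (a)$\Rightarrow$(b): given (a), $\mu$ is real with $|\mu|\le2$, so $\lambda$ is unimodular, $H$ has unimodular entries, and $HH^\ast=Q^2-\mu Q+I=(n-1)I+\mu Q-\mu Q+I=nI_n$, so $H$ is complex Hadamard. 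For (b)$\Rightarrow$(a): the diagonal of the complex Hadamard matrix $H$ is $\lambda$, so $|\lambda|=1$, forcing $\mu$ real and $|\mu|\le2$ as above, and $nI_n=HH^\ast=Q^2-\mu Q+I$ yields $Q^2=(n-1)I+\mu Q$. I would also note that reality of $\mu$ in (a) can be seen intrinsically: comparing an off-diagonal entry of $Q^2=(n-1)I+\mu Q$ gives $\mu=(Q^2)_{ij}/Q_{ij}$ for any $i\neq j$, and self-adjointness of $Q$, hence of $Q^2$, makes this ratio equal to its own complex conjugate.

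I do not expect a genuine obstacle: the core is a one-line matrix manipulation together with bookkeeping about a quadratic. The only point demanding care is lining up the three equivalent packagings ``$\lambda$ is a root of $z^2+\mu z+1$'', ``$|\lambda|=1$'', and ``$-2\le\mu\le2$'', so that the ``if and only if'' actually closes in both directions — in particular one must make sure the range given for $\mu$ really is forced by (b), rather than merely assumed in (a), which is what makes the statement an honest equivalence and explains the hypothesis appearing in part (a) of the theorem.
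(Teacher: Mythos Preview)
Your proposal is correct and follows essentially the same route as the paper: both expand $HH^\ast$ (equivalently $Q^2=QQ^\ast$) using the self-adjointness of $Q$, then invoke $\lambda+\bar\lambda=-\mu$ and $|\lambda|=1$ to reduce the Hadamard condition $HH^\ast=nI$ to $Q^2=(n-1)I+\mu Q$. Your framing of $\lambda$ as a root of $z^2+\mu z+1=0$ and your explicit discussion of why $|\lambda|=1$ forces $-2\le\mu\le2$ (and why $\mu$ is real) are more careful than the paper's treatment, which simply asserts these facts, but the underlying argument is the same one-line matrix identity in both cases.
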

\begin{proof}
Suppose that we have a signature matrix $Q$ satisfying (a). Then, as the number $\lambda$ defined in (b) is unimodular we find that $H=Q+\lambda I$ is a unimodular matrix, moreover we have
\[HH^\ast=(Q+\lambda I)(Q^\ast+\overline{\lambda}I)=nI+\mu Q+2\Re[\lambda]Q=nI,\]
as required. Conversely, let us suppose that we have a complex Hadamard matrix $H$ with constant diagonal $\lambda$, such that the matrix $Q:=H-\lambda I$ is self-adjoint. Then, we find that
\[Q^2=QQ^\ast=(H-\lambda I)(H^\ast-\overline{\lambda}I)=(n+1)I-\lambda(Q+\lambda I)^\ast-\overline{\lambda}(Q+\lambda I)=(n-1)I-2\Re[\lambda]Q,\]
and, as $\lambda$ is unimodular, the statement (a) follows.
\end{proof}
In \cite{BodmannPAMS, HP} various examples are given indicating that complex Hadamard matrices are leading to equiangular tight frames, but so far no general theory has been developed. It is well-known and easy, see \cite{Bodmann}, that with tensoring one can lift self-adjoint complex Hadamard matrices, and therefore, some signature matrices to higher orders. But what if one does not have an initial matrix to lift in the first place? We address this question, by recalling a standard block construction appearing in design theory and algebraic combinatorics. For a reference see the recent work of Kharaghani \emph{et al.}\ \cite{khar}.
\begin{theorem}\label{MT2}
Let $H$ be any complex Hadamard matrix of order $n$. Then there is a self-adjoint complex Hadamard matrix with constant diagonal of order $n^2$.
\end{theorem}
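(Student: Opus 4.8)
The plan is to write down the desired matrix explicitly, via the block construction referred to above. Let $H=(h_{ij})$ and denote its rows by $r_1,\dots,r_n$, viewed as $1\times n$ unimodular row vectors; the Hadamard property of $H$ is precisely the pair of identities $r_ir_k^\ast=n\delta_{ik}$ (orthogonality of the rows, from $HH^\ast=nI_n$) and $\sum_{j=1}^{n}r_j^\ast r_j=nI_n$ (orthogonality of the columns, from $H^\ast H=nI_n$). I would take as candidate the $n^2\times n^2$ matrix $M$, written as an $n\times n$ array of $n\times n$ blocks,
\[
M=\bigl(M_{ij}\bigr)_{i,j=1}^{n},\qquad M_{ij}:=r_j^\ast r_i ,
\]
so that the entry of $M$ in block-row $i$, block-column $j$, and position $(a,b)$ within that block equals $\overline{h_{ja}}\,h_{ib}$.

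The verification then splits into four steps, three of which are immediate. First, every entry of $M$ is a product of two unimodular numbers, so $M$ is unimodular. Second, $M$ is self-adjoint: the $(i,j)$ block of $M^\ast$ is $(M_{ji})^\ast=(r_i^\ast r_j)^\ast=r_j^\ast r_i=M_{ij}$. Third, $M$ has constant diagonal equal to $1$, because its diagonal entries are the diagonal entries of the diagonal blocks $M_{ii}=r_i^\ast r_i$, namely $\overline{h_{ia}}h_{ia}=1$ for all $i$ and $a$. The only computation with any content is $MM^\ast=n^2I_{n^2}$, which I would get by evaluating the $(i,k)$ block of $MM^\ast$:
\[
\sum_{j=1}^{n}M_{ij}\,(M_{kj})^\ast=\sum_{j=1}^{n}(r_j^\ast r_i)(r_k^\ast r_j)=\sum_{j=1}^{n}r_j^\ast\,(r_ir_k^\ast)\,r_j=n\delta_{ik}\sum_{j=1}^{n}r_j^\ast r_j=n^2\delta_{ik}I_n ,
\]
where the third equality uses that $r_ir_k^\ast$ is the scalar $n\delta_{ik}$ and the last uses column-orthogonality of $H$. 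This completes the proof.

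I do not foresee a genuine obstacle; the only non-mechanical step is choosing the right block pattern, and it is worth seeing why the naive guesses do not work. Taking $M_{ij}=h_{ij}B$ for a fixed self-adjoint complex Hadamard matrix $B$ makes $MM^\ast=n^2I$ drop out of the orthogonality of $H$, but forces $h_{ij}=\overline{h_{ji}}$ for self-adjointness of $M$, i.e.\ it requires $H$ itself to be self-adjoint, which a general complex Hadamard matrix is not. The rank-one blocks $M_{ij}=r_j^\ast r_i$ are exactly what reconciles the two requirements: the single conjugation pairs block $(i,j)$ with block $(j,i)$, making self-adjointness automatic, while both the row- and the column-orthogonality of $H$ stay available for the computation of $MM^\ast$. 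As a consistency check, applying the construction with $H$ the $3\times 3$ Fourier matrix yields a self-adjoint complex Hadamard matrix of order $9$ with diagonal $1$, in the spirit of the matrix $Q_9+I$ of \eqref{Q9}; and since the diagonal value produced is always $\lambda=1$, the matrix $Q:=M-I$ satisfies $Q^2=(n^2-1)I_{n^2}-2Q$, so by Theorem~\ref{TH} it is the signature matrix of an equiangular frame.
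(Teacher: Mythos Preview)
Your proof is correct and is essentially identical to the paper's: you use the same block matrix $M_{ij}=r_j^\ast r_i$ (the paper writes $h_j^\ast h_i$) and the same two-line computation of $MM^\ast$ via $r_ir_k^\ast=n\delta_{ik}$ followed by $\sum_j r_j^\ast r_j=nI_n$. The only difference is that you spell out the unimodularity, self-adjointness, and constant-diagonal checks more explicitly, and add some motivational remarks at the end.
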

\begin{proof}
Let $H$ be any complex Hadamard matrix of order $n$, and let us denote its rows by $h_1,h_2,\hdots, h_n$. Consider the following block matrix, where the $(i,j)$th entry of $K$ is the block $h_j^\ast h_i$:
\beql{KX}
K=\left[
\begin{array}{cccc}
h_1^\ast h_1 & . & . & h_n^\ast h_1\\
.	& . & . & .\\
. & . & . & .\\
h_1^\ast h_n & . & . & h_n^\ast h_n\\
\end{array}
\right].
\eeq
We show that $K$ is Hadamard. Indeed: consider its $i$th row. We have
\beql{EQ1}
\sum\limits_{k=1}^{n}(h_k^\ast h_i)(h_i^\ast h_k)=\sum\limits_{k=1}^{n}h_k^\ast (h_ih_i^\ast) h_k=n\sum\limits_{k=1}^{n}h_k^\ast h_k=n^2I_n.
\eeq
Also, for rows $i\neq j$ recalling that the rows of $H$ are complex orthogonal
\beql{EQ2}
\sum\limits_{k=1}^{n}(h_k^\ast h_i)(h_j^\ast h_k)=\sum\limits_{k=1}^{n}h_k^\ast (h_ih_j^\ast) h_k=O_n,
\eeq
where $O_n$ stands for the all $0$ matrix of order $n$. Clearly, $K$ is self-adjoint by construction, and its diagonal is constant $1$.
\end{proof}
In a recent paper, Bodmann and Elwood used essentially Theorem \ref{MT2} and applied it to the Fourier matrix $H=[H]_{i,j}=\mathbf{e}^{2\pi\mathbf{i}/n(i-1)(j-1)}$ from which they concluded
\begin{corollary}[Bodmann--Elwood, \cite{BodmannPAMS}]\label{cbde}
For every $n\geq 2$ there is a self-adjoint complex Hadamard matrix of order $n^2$ with constant diagonal composed of $n$th roots of unity. Consequently there is a nontrivial $n$th root signature matrix corresponding to an equiangular $\left(n^2,n(n+1)/2\right)$ frame.
\end{corollary}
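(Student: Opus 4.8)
The plan is to apply Theorem~\ref{MT2} with $H$ taken to be the $n\times n$ Fourier matrix, to keep track of which roots of unity appear in the resulting matrix of order $n^2$, and then to read off the frame parameters from the Holmes--Paulsen theorem together with formula \eqref{kval}.

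First I would write $H=[h_{i,j}]$ with $h_{i,j}=\mathbf{e}^{2\pi\mathbf{i}(i-1)(j-1)/n}$, so that each row $h_i$ is a vector all of whose coordinates are $n$th roots of unity. By construction the $(i,j)$ block of the matrix $K$ in \eqref{KX} is $h_j^\ast h_i$, whose general entry is $\overline{h_{j,\ell}}\,h_{i,m}=\mathbf{e}^{2\pi\mathbf{i}\left((i-1)(m-1)-(j-1)(\ell-1)\right)/n}$, again an $n$th root of unity; hence every entry of $K$ is an $n$th root of unity. Theorem~\ref{MT2} already tells us that $K$ is a self-adjoint complex Hadamard matrix of order $n^2$, and its diagonal blocks $h_i^\ast h_i$ have all diagonal entries equal to $|h_{i,\ell}|^2=1$, so the diagonal of $K$ is the constant $1$. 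This establishes the first assertion.

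Next I would put $Q:=K-I_{n^2}$, which is self-adjoint, has zero diagonal, and has $n$th-root-of-unity entries off the diagonal. Applying the converse implication of Theorem~\ref{TH} with the unimodular scalar $\lambda=1$ gives $Q^2=(n^2-1)I-2\,\Re[1]\,Q=(n^2-1)I-2Q$, i.e.\ condition (a) of Theorem~\ref{TH} (equivalently of the Holmes--Paulsen theorem) holds with $\mu=-2$. Therefore $Q$ is the signature matrix of an equiangular $(n^2,k)$ frame, and substituting $n\mapsto n^2$ and $\mu=-2$ into \eqref{kval} yields
\[k=\frac{n^2}{2}-\frac{(-2)n^2}{2\sqrt{4(n^2-1)+4}}=\frac{n^2}{2}+\frac{n^2}{2n}=\frac{n(n+1)}{2},\]
which is the claimed frame.

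Since every step is a direct substitution into Theorems~\ref{MT2} and~\ref{TH} and into \eqref{kval}, there is no serious obstacle; the only point demanding a moment's attention is the word ``nontrivial'': one should check that $Q$ is not equivalent, under the diagonal-unitary conjugations that preserve an ETF, to a matrix with entries in $\{+1,-1\}$. For $n\ge 3$ this is clear because genuine non-real $n$th roots of unity survive in $K$, whereas for $n=2$ the Fourier matrix is real and the construction only reproduces a real Seidel matrix, so the adjective ``nontrivial'' is to be understood for $n\ge 3$.
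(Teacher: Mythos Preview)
Your argument is correct and follows precisely the route sketched in the paper: apply Theorem~\ref{MT2} to the Fourier matrix, observe that all entries of the resulting $K$ are $n$th roots of unity, and then read off $\mu=-2$ and $k=n(n+1)/2$ via Theorem~\ref{TH} and formula~\eqref{kval}. The paper states this only as a one-line attribution to Bodmann--Elwood without spelling out the entry computation or the evaluation of $k$, so your version supplies the details the paper omits; your closing remark on the $n=2$ case and the meaning of ``nontrivial'' is a fair caveat that the paper passes over.
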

However, one might choose other complex Hadamard matrices than the Fourier matrix to obtain $n$th root signature matrices. In particular, we have the following
\begin{corollary}\label{CCC}
For every prime $p$ there is a nontrivial $p$th root signature matrix of order $4^ap^{2b}$ for all $0\leq a\leq b$ corresponding to an equiangular $\left(4^ap^{2b},2^ap^b(2^ap^b+1)/2\right)$ frame.
\end{corollary}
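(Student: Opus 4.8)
The plan is to combine Theorem~\ref{MT2} with a judicious choice of the complex Hadamard matrix it is applied to. Write $4^ap^{2b}=(2^ap^b)^2$. By Theorem~\ref{MT2} it suffices to exhibit a complex Hadamard matrix $G$ of order $2^ap^b$ all of whose entries are $p$-th roots of unity: the block matrix $K$ built from $G$ as in \eqref{KX} is then a self-adjoint complex Hadamard matrix of order $(2^ap^b)^2=4^ap^{2b}$ with constant diagonal $1$, and each entry of $K$ has the form $\overline{g}\,g'$ with $g,g'$ entries of $G$, hence is again a $p$-th root of unity. Put $Q:=K-I$; this matrix is self-adjoint with $Q_{ii}=0$ and $\left|Q_{ij}\right|=1$, and $Q^2=(K-I)^2=KK^\ast-2K+I=(n+1)I-2K=(n-1)I-2Q$ with $n=4^ap^{2b}$ (equivalently, this is Theorem~\ref{TH} with $\lambda=1$, which forces $\mu=-2$). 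By the Holmes--Paulsen theorem $Q$ is the signature matrix of an equiangular frame, and substituting $n=4^ap^{2b}$, $\mu=-2$ into \eqref{kval} gives $k=2^ap^b(2^ap^b+1)/2$, exactly the claimed parameters; this is the same computation that underlies Corollary~\ref{cbde}.

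To construct $G$ I would use the hypothesis $0\le a\le b$ to factor $2^ap^b=(2p)^a\,p^{\,b-a}$ and take $G$ to be the Kronecker product of $a$ copies of a complex Hadamard matrix of order $2p$ with $p$-th root entries and $b-a$ copies of the Fourier matrix $F_p=[\mathbf{e}^{2\pi\mathbf{i}(i-1)(j-1)/p}]_{i,j}$, whose entries are $p$-th roots of unity because $p$ is prime. A Kronecker product of complex Hadamard matrices with $p$-th root entries is again one, so $G$ has the required form. (When $p=2$ the first factor may be taken to be the real Hadamard matrix of order $4$, and the whole construction then uses only tensor powers of real Hadamard matrices.)

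The one genuinely substantive ingredient — and the place where both the primality of $p$ and the constraint $a\le b$ are used — is the existence, for every prime $p$, of a complex Hadamard matrix of order $2p$ whose entries are $p$-th roots of unity. For $p=2$ this is elementary. For odd $p$ the natural candidate is a bordered circulant: the matrix whose first row and first column are all ones and whose lower-right $(2p-1)\times(2p-1)$ block is a circulant $M$ over the $p$-th roots of unity with all row sums equal to $-1$ and $MM^\ast=2pI-J$, where $J$ is the all-ones matrix; equivalently, one needs an exponent sequence $0=e_0,e_1,\dots,e_{2p-2}$ in $\mathbb{Z}_p$ all of whose nontrivial cyclic autocorrelations $\sum_{\ell}\mathbf{e}^{2\pi\mathbf{i}(e_\ell-e_{\ell+d})/p}$ equal $-1$. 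For $p=3$ the sequence $0,1,2,2,1$ works and yields the order-$6$ cube-root Hadamard matrix responsible for the $(36,21)$ frame. The main obstacle is to produce such a sequence — or some other uniform construction of an order-$2p$, $p$-th-root complex Hadamard matrix — for every prime $p$, or else to locate the right reference for this Butson-type family; once that is in hand, the remainder is the formal assembly described above. Finally, nontriviality for $p\ge3$ is automatic: after normalizing $G$ so that its first row is all ones, some entry of $G$ — hence of $K$, hence of $Q$ — is a non-real $p$-th root of unity (a real Hadamard matrix of order $2p\equiv2\pmod4$ cannot exist), so $Q$ is not a real Seidel matrix.
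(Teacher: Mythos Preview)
Your approach is essentially the same as the paper's. The paper's proof is a single sentence: it cites Butson's 1962 result (reference \cite{but}) asserting the existence of complex Hadamard matrices composed of $p$th roots of unity for every order $2^ap^b$ with $0\le a\le b$, and then feeds such a matrix into Theorem~\ref{MT2}. Your ``main obstacle'' --- the order-$2p$ Butson matrix for every prime $p$ --- is precisely what that reference supplies, and your Kronecker-product decomposition $(2p)^a\cdot p^{\,b-a}$ together with $F_p$ is a faithful reconstruction of how Butson builds the general order from the $2p$ and $p$ cases. So once you invoke \cite{but}, your write-up is in fact more detailed than the paper's own proof; the surrounding computation ($\mu=-2$, the value of $k$, and the nontriviality argument for $p\ge3$) is correct and matches Corollary~\ref{cbde}.
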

\begin{proof}
Indeed, as a result of Butson \cite{but} implies the existence of complex Hadamard matrices of $p$th roots of unity for every orders $2^ap^b$ such that $0\leq a\leq b$, and these matrices can be lifted to the desired order through Theorem \ref{TH}.
\end{proof}
Setting $p=3$ and $a=b=1$ in Corollary \ref{CCC} above, we immediately get the following 
\begin{corollary}\label{C}
There is a nontrivial cube root signature matrix of order $36$ leading to an equiangular $(36,21)$ frame.
\end{corollary}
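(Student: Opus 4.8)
The proof is immediate: it is the special case $p=3$, $a=b=1$ of Corollary~\ref{CCC}. The plan is simply to check that these values are admissible — $0\le a\le b$ holds — and that they produce the advertised numbers: the order is $4^{a}p^{2b}=4\cdot 9=36$, and the frame dimension is $2^{a}p^{b}\bigl(2^{a}p^{b}+1\bigr)/2=6\cdot 7/2=21$. Thus Corollary~\ref{CCC} already supplies a nontrivial cube root signature matrix of order $36$ belonging to an equiangular $(36,21)$ frame, which is exactly the claim; no further argument is needed.

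Since the paper advertises that this $(36,21)$ frame is exhibited \emph{for the first time}, I would, for concreteness, also trace through the construction once. By Butson's theorem~\cite{but} one picks a complex Hadamard matrix $H$ of order $6=2\cdot 3$ whose entries are all cube roots of unity; here one genuinely needs Butson's construction, as a Kronecker product of the $2\times2$ and $3\times3$ Fourier matrices contains a $-1$ and hence only yields sixth roots of unity. Feeding $H$ into the block construction of Theorem~\ref{MT2} produces a self-adjoint complex Hadamard matrix $K$ of order $36$ with constant diagonal $1$, and because each block $h_{j}^{\ast}h_{i}$ in \eqref{KX} is a product of an entry of $H$ with a conjugated entry of $H$, all off-diagonal entries of $K$ remain cube roots of unity. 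Then $Q:=K-I$ is self-adjoint with $Q_{ii}=0$ and unimodular cube-root off-diagonal entries, and Theorem~\ref{TH} with $\lambda=1$ (equivalently $\mu=-2$) gives $Q^{2}=35I-2Q$; the Holmes--Paulsen theorem then makes $Q$ the signature matrix of an equiangular $(36,k)$ frame, with \eqref{kval} giving
\[
k=\frac{36}{2}-\frac{(-2)\cdot 36}{2\sqrt{4\cdot 35+(-2)^{2}}}=18+\frac{72}{24}=21 .
\]

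For the bare corollary there is no real obstacle. If one instead wants the explicit $36\times36$ signature matrix together with a genuine verification that it is \emph{nontrivial} — i.e.\ not equivalent, under the diagonal-unitary equivalence of signature matrices, to a real Seidel matrix or to one of the already known examples — then that verification is the one nontrivial point, and it is inherited from the corresponding property of the starting Butson matrix $H$.
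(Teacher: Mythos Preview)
Your proof is correct and follows exactly the paper's own route: the paper also derives Corollary~\ref{C} as the special case $p=3$, $a=b=1$ of Corollary~\ref{CCC}, with no additional argument. Your supplementary trace through the construction (via Butson's order-$6$ cube-root Hadamard matrix, Theorem~\ref{MT2}, and Theorem~\ref{TH}) is more explicit than what the paper writes but matches precisely the intended mechanism.
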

A Butson-type complex Hadamard matrix of order $6$, composed of third roots of unity can be found explicitly in \cite{tao, karol}. Corollary \ref{C} answers a recent question of Bodmann \emph{et al.}\ \cite{Bodmann}. Finally, let us mention here that Theorem \ref{MT2} naturally leads to parametric families of complex Hadamard matrices. Let us recall that a complex Hadamard matrix $H$ is dephased if the first row and column of $H$ consist of numbers $1$. Clearly, every complex Hadamard matrix can be transformed into a dephased form. We have the following
\begin{proposition}\label{par}
Suppose that $H$ is a dephased complex Hadamard matrix with $m$ free parameters. Then there is a complex Hadamard matrix of order $n^2$ with $m+(n-1)^2$ free parameters, moreover there is a self-adjoint complex Hadamard matrix with constant diagonal featuring $m+(n-1)(n-2)/2$ free parameters.
\end{proposition}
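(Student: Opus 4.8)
The plan is to promote the block construction behind Theorem~\ref{MT2} to a parametric version by inserting free phases in front of the blocks, and then to read the parameter count off the entries of the resulting matrix. Given a complex Hadamard matrix $H$ of order $n$ with rows $h_1,\dots,h_n$ and an \emph{arbitrary} matrix $\varepsilon=(\varepsilon_{ij})$ of order $n$ with unimodular entries, I would form the $n^2\times n^2$ block matrix $\widetilde K$ whose $(i,j)$th block is $\varepsilon_{ij}\,h_j^\ast h_i$. The first step is to observe that $\widetilde K$ is complex Hadamard for \emph{every} such $\varepsilon$: repeating the computation in \eqref{EQ1}--\eqref{EQ2}, the $(i,j)$th block of $\widetilde K\widetilde K^\ast$ is $\sum_{k}\varepsilon_{ik}\overline{\varepsilon_{jk}}\,h_k^\ast(h_ih_j^\ast)h_k=n\delta_{ij}\sum_{k}\varepsilon_{ik}\overline{\varepsilon_{jk}}\,h_k^\ast h_k$, which is $0$ for $i\ne j$ because $h_ih_j^\ast=0$, and equals $n\sum_k h_k^\ast h_k=n^2I_n$ for $i=j$ because $|\varepsilon_{ik}|=1$. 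Thus the phases $\varepsilon_{ij}$ do not interfere with the Hadamard property at all.

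For the first assertion I would put $H$ in dephased form, carrying its $m$ free parameters, and choose $\varepsilon$ with $\varepsilon_{1j}=\varepsilon_{i1}=1$, so that the $(n-1)^2$ phases $\varepsilon_{ij}$ with $2\le i,j\le n$ remain free. Writing the entries of $\widetilde K$ as $\widetilde K_{(i,s),(j,t)}=\varepsilon_{ij}\overline{H_{js}}H_{it}$, one checks at once that $\widetilde K$ is again dephased, that the entries $\widetilde K_{(1,s),(j,1)}=\overline{H_{js}}$ recover $\overline H$, and that the entries $\widetilde K_{(i,1),(j,1)}=\varepsilon_{ij}$ (for $2\le i,j\le n$) recover $\varepsilon$. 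These two sets of positions are disjoint --- the first lies in block-row $1$, the second in block-rows $\ge 2$ --- so the map carrying the $m$ parameters of $H$ and the phases $\varepsilon_{ij}$ to the matrix $\widetilde K$ is an immersion, and its image is a family of dephased complex Hadamard matrices of order $n^2$ with $m+(n-1)^2$ free parameters.

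For the second assertion I would instead restrict $\varepsilon$ to be self-adjoint with $\varepsilon_{ii}=1$. Then $\widetilde K$ is self-adjoint, since the diagonal block $h_i^\ast h_i$ is self-adjoint while the blocks at $(i,j)$ and $(j,i)$ are interchanged by $(\varepsilon_{ij}h_j^\ast h_i)^\ast=\overline{\varepsilon_{ij}}h_i^\ast h_j=\varepsilon_{ji}h_i^\ast h_j$, and its diagonal is constantly $(h_i^\ast h_i)_{ss}=|H_{is}|^2=1$. Self-adjoint unimodular matrices with unit diagonal form a family with $\binom n2$ parameters; conjugating $\varepsilon\mapsto D\varepsilon D^\ast$ by a diagonal unitary $D$ amounts to conjugating $\widetilde K$ by $D\otimes I_n$ and preserves both self-adjointness and the constant diagonal, so it introduces an $(n-1)$-parameter redundancy that I would remove by fixing the slice $\varepsilon_{1j}=1$. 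This leaves the $\binom{n-1}{2}=(n-1)(n-2)/2$ phases $\varepsilon_{ij}$ with $2\le i<j\le n$, and the same disjoint-entries argument as before shows that, together with the $m$ parameters of $H$, these are genuine free parameters of $\widetilde K$, giving a self-adjoint complex Hadamard matrix with constant diagonal and $m+(n-1)(n-2)/2$ free parameters.

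The computational part --- the Hadamard identity for $\widetilde K$ and the identification of which entries are free --- is routine. The one point requiring care is to make sure that the inserted phases $\varepsilon_{ij}$ are really new parameters and are not silently absorbed when $\widetilde K$ is dephased (or, in the self-adjoint case, normalized up to conjugation by a diagonal unitary); I expect this to be the main obstacle, and the disjoint-entries observation above --- exhibiting explicit, non-overlapping blocks of entries of $\widetilde K$ that reproduce $H$ and $\varepsilon$ separately --- is exactly what makes the parametrization an immersion and settles it.
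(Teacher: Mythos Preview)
Your approach is essentially the same as the paper's: multiply the blocks $h_j^\ast h_i$ in the construction of Theorem~\ref{MT2} by unimodular scalars $\varepsilon_{ij}$ (the paper calls them $x_{i,j}$), observe that the Hadamard identities \eqref{EQ1}--\eqref{EQ2} survive, and then restrict to $\varepsilon_{1j}=\varepsilon_{i1}=1$ (respectively $\varepsilon$ self-adjoint with unit diagonal and $\varepsilon_{1j}=1$) to count the remaining phases. The paper's proof is a two-sentence sketch that simply asserts the independence of the $m$ original parameters from the new phases by noting that the former live in the first $n$ rows of $K$; your disjoint-entries argument---reading off $\overline{H}$ from the positions $\widetilde K_{(1,s),(j,1)}$ and $\varepsilon$ from $\widetilde K_{(i,1),(j,1)}$---is a more explicit and rigorous version of the same observation, and correctly establishes that the parametrization is an immersion. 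Your discussion of the $(n-1)$-dimensional redundancy via conjugation by $D\otimes I_n$ is not strictly needed (one can simply impose $\varepsilon_{1j}=1$ from the outset, as the paper does), but it does no harm and clarifies why that normalization is natural.
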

\begin{proof}
Indeed, as one is free to replace the blocks of $K$ $h_j^\ast h_i$ in \eqref{KX} with $x_{i,j}h_j^\ast h_i$ for $2\leq i,j\leq n$ in Theorem \ref{MT2}, as this operation does not affect the validity of equations \eqref{EQ1} and \eqref{EQ2}. Moreover, if the unimodular variables $x_{i,j}$ are chosen in a way that $x_{i,i}=1$ and $x_{i,j}=\overline{x}_{j,i}$ for every $2\leq i,j\leq n$, then the resulting matrix is a self-adjoint complex Hadamard matrix with constant diagonal, as required. Note that the first $m$ variables are featured in the first $n$ rows of $K$, and therefore they are independent from the rest.
\end{proof}
The case $|\mu|=2$ in part (b) of Theorem \ref{TH} corresponds to self-adjoint complex Hadamard matrices with constant diagonal. However, these type of objects are somewhat rare, as the following spectral analysis shows. In particular, we have the following folklore
\begin{lemma}[Gow, \cite{RG}]\label{GOWL}
Let $H$ be a self-adjoint complex Hadamard matrix of order $n$ with constant diagonal. Then $n$ must be a square number.
\end{lemma}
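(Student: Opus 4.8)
The plan is to run a short spectral argument on $H$ itself, exploiting that self-adjointness collapses the Hadamard condition into $H^2 = nI_n$. First I would record that since $H = H^\ast$ and $HH^\ast = nI_n$, we have $H^2 = nI_n$, so every eigenvalue $\nu$ of $H$ satisfies $\nu^2 = n$, i.e.\ $\nu \in \{\sqrt n, -\sqrt n\}$. Being Hermitian, $H$ is diagonalizable, so if $a$ denotes the multiplicity of $\sqrt n$ and $b$ that of $-\sqrt n$, then $a,b$ are nonnegative integers with $a+b=n$.

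Next I would compute the trace in two ways. On the one hand, $\operatorname{tr}(H) = (a-b)\sqrt n$. On the other hand, $H$ has constant diagonal, say $H_{ii}=\lambda$ for all $i$; since $H$ is self-adjoint the diagonal entries are real, and since they are unimodular we get $\lambda \in \{1,-1\}$, whence $\operatorname{tr}(H) = n\lambda = \pm n$. Comparing, $(a-b)\sqrt n = \pm n$, so $a-b = \pm\sqrt n$. The left-hand side is an integer, therefore $\sqrt n$ is an integer and $n$ is a perfect square, as claimed.

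There is essentially no hard step here; the only thing to be careful about is justifying $\lambda = \pm 1$ (which is exactly where ``self-adjoint'' plus ``unimodular'' is used, and which is what forces the trace to be an integer multiple of $n$ rather than just a complex number of modulus $n$). One could alternatively phrase the whole thing through Theorem~\ref{TH} with $|\mu| = 2$: the associated signature matrix $Q = H \mp I$ has integer-spectrum-controlling parameters via \eqref{kval}, but the direct trace computation above is cleaner and self-contained, so that is the route I would take.
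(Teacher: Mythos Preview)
Your proof is correct and follows essentially the same spectral/trace argument as the paper: both compute the eigenvalues of $H$ as $\pm\sqrt{n}$ and compare the trace expressed via multiplicities with the trace read off the diagonal. The only cosmetic difference is that the paper phrases the diagonal count as ``$1$ appears $k$ times'' before specializing to the constant-diagonal case, whereas you use $\lambda=\pm1$ directly; your route is slightly more streamlined but not a genuinely different approach.
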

\begin{proof}
Consider a self-adjoint complex Hadamard matrix of order $n$, and suppose that the number $1$ appears exactly $k$ times in its diagonal. Clearly, as $H$ is self-adjoint, its eigenvalues are $\pm\sqrt{n}$, where let us denote by $s$ the multiplicity of its positive eigenvalues. Then, we have
\[\mathrm{Tr}(H)=s\sqrt{n}+(n-s)(-\sqrt{n})=k-(n-k),\]
from which we find that $n$ is either square, or even with $H$ having zero trace, the latter case, however is not possible by our assumptions.
\end{proof}
We remark here that the proof of Lemma \ref{GOWL} implies that during the classification of self-adjoint complex Hadamard matrices of order $6$, which has been carried out in \cite{BN} successfully, one can impose the initial assumption that the diagonal of such a matrix is $[1,1,1,-1,-1,-1]$ considerably simplifying the calculations required.

Next we exhibit nontrivial complex Hadamard matrices corresponding to $|\mu|\neq 2$. The examples are to be introduced are the so-called induced complex Hadamard matrices, coming from Hadamard designs. In \cite{GC} (and, independently, but slightly later in \cite{exotic}) the following result was obtained:
\begin{lemma}[Chan--Godsil \cite{GC}]\label{CGT}
Let $U$ be a $(4m-1,2m-1,m-1)$ Hadamard design. Then by exchanging every numbers $0$ in $U$ to the unimodular complex number
\beql{1}
a=-1+\frac{1}{2m}\pm\mathbf{i}\frac{\sqrt{4m-1}}{2m}
\eeq
we obtain a complex Hadamard matrix $U_{4m-1}$.
\end{lemma}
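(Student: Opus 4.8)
The plan is to express the induced matrix in terms of the $\{0,1\}$ incidence matrix $U$ of the design and of the all-ones matrix, and then to reduce the whole verification to the single defining identity of a symmetric $2$-design. Write $J$ for the all-ones matrix of order $n:=4m-1$ and keep $U$ for the incidence matrix of the Hadamard design. Since a $(4m-1,2m-1,m-1)$ design is symmetric, $U$ is a square $\{0,1\}$-matrix with constant row and column sums $k:=2m-1$, so $UJ=JU=kJ$, and it satisfies $UU^{\mathsf T}=(k-\lambda)I+\lambda J=mI+(m-1)J$. Replacing each $0$ of $U$ by $a$ and each $1$ by $1$ produces exactly
\[U_{4m-1}=U+a(J-U)=aJ+(1-a)U=:H.\]

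First I would record a few elementary facts about the scalar $a$ of \eqref{1}. Writing $a=-1+\tfrac1{2m}\pm\mathbf{i}\tfrac{\sqrt{4m-1}}{2m}$ and hence $1-a=2-\tfrac1{2m}\mp\mathbf{i}\tfrac{\sqrt{4m-1}}{2m}$, direct computation gives $|a|^2=\bigl(-1+\tfrac1{2m}\bigr)^2+\tfrac{4m-1}{4m^2}=1$ (so that $H$ has unimodular entries, the $1$'s being trivially unimodular), $|1-a|^2=\bigl(2-\tfrac1{2m}\bigr)^2+\tfrac{4m-1}{4m^2}=4-\tfrac1m=\tfrac nm$, and $2\Re(a)-2|a|^2=2\Re(a)-2=-4+\tfrac1m$.

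Next I would expand $HH^\ast$. Using that $U$ and $J$ are real, $J^{\mathsf T}=J$, $J^2=nJ$, and $JU^{\mathsf T}=UJ=kJ$, one gets
\[HH^\ast=\bigl(aJ+(1-a)U\bigr)\bigl(\bar aJ+(1-\bar a)U^{\mathsf T}\bigr)=|a|^2J^2+\bigl(a(1-\bar a)+\bar a(1-a)\bigr)kJ+|1-a|^2\,UU^{\mathsf T}.\]
Substituting $UU^{\mathsf T}=mI+(m-1)J$ and collecting terms yields $HH^\ast=|1-a|^2m\,I+\beta J$ with
\[\beta=|a|^2 n+\bigl(2\Re(a)-2|a|^2\bigr)k+|1-a|^2(m-1).\]
By the facts above $|1-a|^2m=n$, so the coefficient of $I$ is already correct; and substituting $|a|^2=1$, $2\Re(a)-2=-4+\tfrac1m$, $|1-a|^2=4-\tfrac1m$, $n=4m-1$, $k=2m-1$ into $\beta$ gives, after a one-line simplification, $\beta=(4m-1)+(-4+\tfrac1m)(2m-1)+(4-\tfrac1m)(m-1)=0$. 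Hence $HH^\ast=nI$, which (as $H$ is square) also gives $H^\ast H=nI$; together with the unimodularity of its entries this shows that $H=U_{4m-1}$ is a complex Hadamard matrix, the two sign choices in \eqref{1} producing a complex-conjugate pair.

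There is no real obstacle: the only thing that has to happen is the cancellation $\beta=0$, and this is forced by the Hadamard-design parameter relations $v=4m-1$, $k=2m-1$, $\lambda=m-1$ — indeed one could instead \emph{derive} $a$ by imposing $|a|=1$ together with $\beta=0$ and solving the resulting quadratic, which is presumably how \eqref{1} was found. The only point deserving a word of care is the justification of $UU^{\mathsf T}=mI+(m-1)J$ and of the constancy of the row and column sums of $U$; these are the standard defining properties of a symmetric $2$-design and may be quoted from any design-theory reference.
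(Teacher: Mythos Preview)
Your argument is correct. The paper itself does not supply a proof of this lemma at all: it is quoted from \cite{GC} (and \cite{exotic}) and then used as input for Proposition~\ref{L2}, so there is nothing in the paper to compare your proof against. What you have written is exactly the standard direct verification one would expect: express the induced matrix as $H=aJ+(1-a)U$, invoke the symmetric $2$-design identities $UJ=JU=(2m-1)J$ and $UU^{\mathsf T}=mI+(m-1)J$, and then check that the two scalar conditions $|1-a|^2m=n$ and $\beta=0$ pin down precisely the value of $a$ given in \eqref{1}. Your side remark that one could \emph{derive} $a$ by solving $|a|=1$ together with $\beta=0$ is in fact how the result is presented in \cite{GC} and \cite{exotic}.
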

Now we use this to exhibit complex Hadamard matrices corresponding to frames. We say that an Hadamard design $U$ is skew, if $U+U^T+I=J$ holds, where $J$ is the all $1$ matrix. Further, let us denote by $\sqrt{z}$ the principal square root of $z$. We have the following
\begin{proposition}\label{L2}
Suppose that we have a skew Hadamard design $U$ of order $n$. Then there exists a complex Hadamard matrix $H$ of order $n$ with diagonal entries $\lambda$, such that the matrix $Q:=H-\lambda I$ is self-adjoint.
\end{proposition}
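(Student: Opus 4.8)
The plan is to construct $H$ directly from the incidence matrix of the design rather than from the matrix $U_{4m-1}$ of Lemma~\ref{CGT}. Recall that a skew Hadamard design of order $n$ is a symmetric $2$-design with parameters $(4m-1,2m-1,m-1)$, so $n=4m-1$ and, writing $U$ for its $0/1$ incidence matrix, we have $UU^T=U^TU=mI+(m-1)J$, $UJ=JU=(2m-1)J$, while the skew condition $U+U^T+I=J$ says exactly that $U$ has zero diagonal and that for every $i\neq j$ precisely one of $U_{ij},U_{ji}$ is $1$. (Note that the substitution of Lemma~\ref{CGT} does not help directly: $U_{4m-1}$ is not conjugate-symmetric off the diagonal, a surviving $1$ sitting opposite a surviving $a\neq1$.) For a unimodular $z$ to be chosen I would set
\[Q:=zU+\bar zU^T,\qquad H:=Q+\lambda I,\]
with $\lambda$ as in Theorem~\ref{TH}(b). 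By the skew condition $Q$ has zero diagonal, off-diagonal entries $z$ or $\bar z$ (hence unimodular), and $Q^\ast=\bar zU^T+zU=Q$; thus by Theorem~\ref{TH} it suffices to exhibit a real $\mu$ with $|\mu|\le2$ and $Q^2=(n-1)I+\mu Q$.

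The computational core is the verification of that spectral identity. First I would establish two identities that follow from the design parameters together with skewness: squaring $U+U^T=J-I$ and using $UU^T+U^TU=2mI+2(m-1)J$ and $(J-I)^2=(n-2)J+I$ yields $U^2+(U^T)^2=(2m-1)(J-I)$, while substituting $U^T=J-I-U$ and reducing with $UJ=JU=(2m-1)J$ yields $U^2-(U^T)^2=U^T-U$. Putting $S:=U-U^T$, these give $Q=\tfrac{z+\bar z}{2}(J-I)+\tfrac{z-\bar z}{2}S$ and, after expanding $Q^2=z^2U^2+\bar z^2(U^T)^2+2mI+2(m-1)J$, an expression for $Q^2$ as an explicit combination of $I$, $J$ and $S$. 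Equating coefficients in $Q^2-(n-1)I-\mu Q=0$: the $S$-coefficient forces $\mu=-(z+\bar z)$ as soon as $z$ is non-real, and both the $I$- and the $J$-coefficient then reduce to the single condition $4m(\Re z)^2=1$.

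It remains to choose $z=\mathbf{e}^{\mathbf{i}\theta}$ with $\cos\theta=\tfrac{1}{2\sqrt m}$; since $m\geq1$ this $z$ is non-real, the constraint $4m(\Re z)^2=1$ holds, and $\mu=-2\cos\theta=-1/\sqrt m$ is real with $|\mu|\le1\le2$. Hence $Q^2=(n-1)I+\mu Q$, Theorem~\ref{TH} applies, and $H=Q+\lambda I$ is a complex Hadamard matrix with $\lambda=-\mu/2\pm\mathbf{i}\sqrt{1-|\mu|^2/4}=\tfrac{1}{2\sqrt m}\pm\mathbf{i}\tfrac{\sqrt{4m-1}}{2\sqrt m}$; by construction $H$ has constant diagonal $\lambda$ and $Q=H-\lambda I$ is self-adjoint, completing the proof. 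The one point deserving care is that the $I$- and $J$-equations collapse to the \emph{same} constraint on $\Re z$: this consistency is precisely what the skew hypothesis provides (through $U^2+(U^T)^2=(2m-1)(J-I)$ and $U^2-(U^T)^2=U^T-U$), and without it the $J$-component of $Q^2$ would not cancel, so no unimodular $z$ could satisfy condition (a) of Theorem~\ref{TH}.
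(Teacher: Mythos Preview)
Your argument is correct; it is, however, a genuinely different route from the paper's. The paper does not bypass Lemma~\ref{CGT}: it observes that for a skew design the Chan--Godsil matrix can be written as $K=U+aU^T+aI$ (because $J-U=U^T+I$), and then simply multiplies by the unimodular scalar $\overline{\sqrt{a}}$ to obtain $H=\overline{\sqrt{a}}\,U+\sqrt{a}\,U^T+\sqrt{a}\,I$, which visibly has constant diagonal $\sqrt{a}$ and self-adjoint off-diagonal part. So your parenthetical remark that Lemma~\ref{CGT} ``does not help directly'' is too quick: a global phase rotation already repairs the asymmetry you noted. What each approach buys: the paper's proof is a one-liner once Lemma~\ref{CGT} is available, while yours is self-contained (it never invokes Chan--Godsil) and makes the value of $\mu$ transparent, at the cost of the explicit computation of $U^2\pm(U^T)^2$. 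In fact the two constructions coincide: your $z$ with $\Re z=1/(2\sqrt{m})$ is exactly $\overline{\sqrt{a}}$ (or $\sqrt{a}$), and your $\lambda=\tfrac{1}{2\sqrt{m}}\pm\mathbf{i}\tfrac{\sqrt{4m-1}}{2\sqrt{m}}$ equals $\sqrt{a}$ for $a$ as in~\eqref{1}.
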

\begin{proof}
Use Lemma \ref{CGT} to exhibit a complex Hadamard matrix $K=U+aU^T+aI$ where $a$ is the unimodular complex number defined in \eqref{1}. Now multiply this matrix with the unimodular number $\overline{\sqrt{a}}$ to obtain the matrix $H=\overline{\sqrt{a}}K=\overline{\sqrt{a}}U+\sqrt{a}U^T+\sqrt{a}I$.
\end{proof}
It is well-known that the Paley-type Hadamard designs of prime orders are skew, moreover it is conjectured that skew Hadamard designs exists for every order $n=4m-1$ \cite{SVy}. Therefore we have infinitely many equiangular tight frames coming from Proposition \ref{L2}. In particular, we have the following
\begin{corollary}\label{Cxd}
Suppose that we have a skew Hadamard design of order $n\geq 3$. Then there are equiangular $\left(n,(n-1)/2 \right)$ and $\left(n,(n+1)/2\right)$ frames.
\end{corollary}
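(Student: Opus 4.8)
The plan is to push a skew Hadamard design of order $n$ through the chain Proposition \ref{L2} $\to$ Theorem \ref{TH} $\to$ the Holmes--Paulsen characterization, and then read off the frame dimension from \eqref{kval}. By Proposition \ref{L2} there is a complex Hadamard matrix $H=\overline{\sqrt a}\,U+\sqrt a\,U^{T}+\sqrt a\,I$ of order $n$ with constant diagonal $\lambda=\sqrt a$ (principal square root), where $a$ is the unimodular number of \eqref{1}; moreover $Q:=H-\lambda I=\overline{\sqrt a}\,U+\sqrt a\,U^{T}$ is self-adjoint with zero diagonal and unimodular off-diagonal entries. Expanding $Q^{2}=QQ^{\ast}=(H-\lambda I)(H^{\ast}-\overline{\lambda}I)$ exactly as in the proof of Theorem \ref{TH} gives $Q^{2}=(n-1)I+\mu Q$ with $\mu=-2\,\Re[\lambda]$, and $|\mu|\le 2$ automatically since $|\lambda|=1$.

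The only real computation is to pin down $\mu$ and then $k$. I would avoid extracting $\sqrt a$ explicitly: since $|\lambda|=1$, one has $\mu^{2}=(2\Re[\lambda])^{2}=\lambda^{2}+\overline{\lambda}^{2}+2=a+\overline{a}+2=2\Re[a]+2$. Writing $n=4m-1$ in \eqref{1}, so that $2m=(n+1)/2$ and $\sqrt{4m-1}=\sqrt n$, gives $a=(1-n\pm 2\mathbf{i}\sqrt n)/(n+1)$, whence $\Re[a]=(1-n)/(n+1)$ and $\mu^{2}=4/(n+1)$. As $a$ is non-real, the principal square root $\lambda=\sqrt a$ has strictly positive real part, so $\mu=-2/\sqrt{n+1}<0$. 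Plugging this into \eqref{kval} and using $4(n-1)+\mu^{2}=4n^{2}/(n+1)$, i.e.\ $\sqrt{4(n-1)+\mu^{2}}=2n/\sqrt{n+1}$, the correction term simplifies to $\mu n/(2\cdot 2n/\sqrt{n+1})=\mu\sqrt{n+1}/4=-1/2$, so $k=n/2+1/2=(n+1)/2$. By the Holmes--Paulsen theorem $Q$ is therefore the signature matrix of an equiangular $\bigl(n,(n+1)/2\bigr)$ frame.

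For the second frame I would simply invoke the observation recorded right after \eqref{kval}: negating a signature matrix sends an $(n,k)$ frame to an $(n,n-k)$ frame. Concretely $-Q=(-H)-(-\lambda)I$, with $-H$ a complex Hadamard matrix of constant diagonal $-\lambda$ and $-Q$ self-adjoint with zero diagonal and unimodular entries satisfying $(-Q)^{2}=(n-1)I-\mu(-Q)$; hence $-Q$ is the signature matrix of an equiangular $\bigl(n,n-(n+1)/2\bigr)=\bigl(n,(n-1)/2\bigr)$ frame, and both frames claimed in the corollary exist (the hypothesis $n\ge 3$ ensures $\mu^{2}=4/(n+1)<4$, so there is no degeneracy). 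I do not anticipate a genuine obstacle: the statement is essentially a specialization of the machinery already built. The one point that needs care is the sign bookkeeping — that $\Re[\lambda]>0$, so that \eqref{kval} outputs $(n+1)/2$ and not $(n-1)/2$ — but since the $Q$/$-Q$ pair produces both values anyway, even getting this backwards does no harm.
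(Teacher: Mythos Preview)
Your proposal is correct and follows exactly the route the paper sets up: the corollary is stated without an explicit proof, being an immediate consequence of Proposition~\ref{L2} combined with Theorem~\ref{TH} and formula~\eqref{kval}, together with the $Q\mapsto -Q$ duality noted after~\eqref{kval}. You have merely filled in the arithmetic the paper leaves to the reader --- the computation $\mu^{2}=2\Re[a]+2=4/(n+1)$ via $\lambda^{2}=a$, and the simplification of~\eqref{kval} to $k=(n\pm 1)/2$ --- and your bookkeeping (including the observation that the sign of $\mu$ is immaterial since both values of $k$ are claimed) is sound.
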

Hence, if the conjecture regarding the existence of skew Hadamard matrices is true, then we have ETFs with parameters $(2k-(-1)^k,k)$ for every $k$. Renes, although used a different approach, obtained essentially the same result in \cite{R}.

An interesting question is to ask if it is possible to construct $(k^2,k)$ equiangular frames from complex Hadamard matrices. These type of frames are equivalent to SIC-POVMs \cite{grassl}, and have many applications in quantum information theory. Unfortunately, equation \eqref{kval} leads to $\mu=\sqrt{k+1}(k-2)$ which, combined with the bound $|\mu|\leq 2$, implies that $k=2$ or $3$. The case $k=2$ easily leads to the signature matrix
\[Q_4=\left[\begin{array}{cccc}
0 & 1 & 1 & 1\\
1 & 0 & \mathbf{i} & -\mathbf{i}\\
1 & -\mathbf{i} & 0 & \mathbf{i}\\
1 & \mathbf{i} & -\mathbf{i} & 0\\
\end{array}\right],\]
up to equivalence (cf.\ \cite{HP}). Case $k=3$ implies that $\mu=2$ which corresponds to, for example, the one-parameter signature matrix $-Q_9(a)$, where $Q_9(a)+I$ is the family of complex Hadamard matrices coming from Proposition \ref{par}, applied to the starting-point complex Hadamard matrix $Q_9+I$ (cf.\ formula \eqref{Q9}). One cannot but wonder what other combinatorial objects correspond to the signature matrices of $(k^2,k)$ ETFs for $k\geq 4$.
\section*{Acknowledgement}
The author is greatly indebted to the referees for their valuable comments and suggestions.

\end{document}